\numberwithin{equation}{section}
\numberwithin{equation}{section}
\newtheorem{theorem}{Theorem}%[section]
\newtheorem{lemma}[theorem]{Lemma}
\newtheorem{corollary}[theorem]{Corollary}
\newcommand{\id}{{\iota}}
\newcommand{\ot}{{\otimes}}
\newcommand{\om}{{\omega}}
\newcommand{\tp}{{\widehat{\otimes}}}
\newcommand{\vtp}{{\overline{\otimes}}}
\newcommand{\B}{{\mathcal{B}}}
\newcommand{\fee}{{\varphi}}
\newcommand{\G}{\mathbb G}
\newcommand{\LL}{{\mathcal{L}^{\infty}(\G)}}
\newcommand{\LO}{{\mathcal{L}^{1}(\G)}}
\newcommand{\LT}{{\mathcal{L}^{2}(\G)}}
\newcommand{\MG}{{\mathcal{M}(\G)}}
\newcommand{\CZ}{{\mathcal{C}_0(\G)}}
\newcommand{\BZ}{{\mathcal{B}_0(\LT)}}
\title
{Compact Operators in Regular LCQ Groups}
\author{Mehrdad Kalantar}
\address{ School of Mathematics and Statistics,
Carleton University, Ottawa, ON, Canada}
\email{mkalanta@math.carleton.ca}
\subjclass[2000]{Primary 46L89.}
\begin{document}

\begin{abstract}
We show that a regular locally compact quantum group $\G$ is discrete
if and only if $\LL$ contains non-zero compact operators on $\LT$.
As a corollary we classify all discrete quantum groups among
regular locally compact quantum groups $\G$ where $\LO$ has the Radon--Nikodym property.
\end{abstract}
\maketitle

It is known that for a locally compact group $G$,
the following are equivalent: $(i)$ $G$ is discrete, $(ii)$ $L^\infty(G)$ contains a compact operator on $L^2(G)$,
$(iii)$ $L^1(G)$ has the Radon--Nikodym property, $(iv)$ the von Neumann algebra $L^\infty(G)$ is purely atomic
(cf. \cite{Diest2} and \cite{Taylor}).

In the general setting of locally compact quantum groups $\G$, it is known that $(i)$ implies other
properties; $(iii)$ and $(iv)$ are equivalent; and there are examples of $\G$ that satisfy $(iii)$, but not $(i)$
(cf.  \cite{Taylor} and \cite{Wor}).

In this paper we investigate the relations between $(ii)$ and other above properties.
We prove that in the case of regular locally compact quantum groups, $(ii)$ implies
$(i)$, whence $(iii)$ and $(iv)$. Moreover we classify regular locally compact quantum groups
which satisfy $(iv)$, and (or but not) $(ii)$.

First, let us recall some definitions and preliminary results that we will be using in this paper.
For more details on locally compact quantum groups we refer the reader to \cite{KV}.

A {\it locally compact quantum group} $\G$ is a
quadruple $(\LL, \Gamma, \varphi, \psi)$, where $\LL$ is a
von Neumann algebra,
$\Gamma: \LL\to \LL \bar\otimes \LL$
is a co-associative co-multiplication, i.e. a unital injective *-homomorphism, satisfying
\[
(\Gamma\otimes\id)\,\Gamma\,=\,(\id\otimes\Gamma)\,\Gamma\,,
\]
and $\varphi$ and $\psi$ are (normal faithful semi-finite) left and right
invariant weights on $\LL$, that is
\[
\fee\big((\om\otimes\id)\,\Gamma(x)\big)\,=\,\fee(x)\om(1)
\]
for all $\om\in\LO^+$ and $x\in\LL^+$ where $\fee(x)<\infty$, and
\[
\psi\big((\id\otimes\om)\,\Gamma(x)\big)\,=\,\psi(x)\om(1)
\]
for all $\om\in\LO^+$ and $x\in\LL^+$ where $\psi(x)<\infty$.
We denote by $\LT$ the GNS Hilbert space of $\fee$. Then one obtains
two distinguished unitary operators
$W\in \LL\vtp\B(\LT)$ and $V\in\B(\LT)\vtp\LL$,
called the left and right \emph{fundamental unitaries}, % $W$ and $V$ on $\LT\otimes \LT$
which satisfy the \emph{pentagonal relation},
and such that the co-multiplication $\Gamma$ on $\LL$ can be expressed as
\begin{equation*} %\label {F.com}
\Gamma(x) \,=\, W^{*}(1\otimes x)W\,=\,V(x\otimes 1)V^*
~~\quad(x \in \LL)\,.
\end{equation*}
%It is shown that $W\in \LL\vtp\B(\LT)$ and $V\in\B(\LT)\vtp\LL$.
The \emph{reduced quantum group $C^*$-algebra}
\[
\overline{\big\{\,(\id\otimes\om)\,W\,:\,\om\in\B(\LT)_*\,\big\}}^{\|\cdot\|}\,=\,
\overline{\big\{\,(\om\otimes\id)\,V\,:\,\om\in\B(\LT)_*\,\big\}}^{\|\cdot\|}
\]
is denoted by $\CZ$, which
is a weak$^*$ dense $C^*$-subalgebra of $\LL$.
Let $\MG$ denote the dual space $\CZ^{*}$.
There exists a completely contractive multiplication on $\MG$ given by
the convolution
\[
\star : \MG\tp \MG\ni \mu\ot \nu \,\longmapsto\, \mu \star \nu 
= \mu (\id\otimes \nu)\Gamma = \nu (\mu \otimes \id)\Gamma
\in \MG
\]
such that  $\MG$ contains $\LO:= \LL_*$ as a norm closed two-sided ideal.
Therefore, for each $\mu\in\MG$, we obtain a pair of completely bounded maps
\begin{equation*}
f\, \longmapsto \, \mu \star f\ \  ~ \mbox{and } ~ \ \ f \,\longmapsto\, f \star \mu
\end{equation*}
on $\LO$ through the left and right convolution products of $\MG$.
The adjoint maps give the convolution actions $x\mapsto \mu\star x$ and $x\mapsto x\star\mu$
that are normal completely bounded maps on $\LL$
(note that our notation for the convolution actions is opposite to
the more commonly used (e.g. \cite{HNR}), where $\mu\star x$ is denoted by $x\star \mu$).

For a Hilbert space $H$, we denote by $\B(H)$,
and $\B_0(H)$ the spaces of all bounded operators, and compact operators on $H$, respectively.

A locally compact quantum group $\G$ is said to be {\it regular} if
the norm-closed linear span of $\{\,(\id\otimes\om)\,(\Sigma V)\,:\,\om\in\B(\LT)_*\,\}$ equals $\B_0(\LT)$,
where $\Sigma$ denotes the flip operator on $\LT\otimes\LT$.
All Kac algebras, as well as discrete and compact quantum groups are regular \cite{BS}.

It follows from \cite[Theorem 3.1]{HNR} that for $a\in\B_0(\LT)$ and $\om\in\B(\LT)_*$, we have
\[
(\id\otimes\om)\,\left(W^*\,(1\otimes a)W\right)\,\in\,\CZ~~~~~~~~~~~~\ \ \ \ \text{and} \ \ \ \ ~~~~~~~~~~~~
(\om\otimes\id)\,\left(V\,(a\otimes 1)V^*\right)\,\in\,\CZ\,,
\]
and also it is proved in \cite[Corollary 3.6]{HNR} that if $\G$ is regular, then
\[
(\om\otimes\id)\,\left(W^*\,(1\otimes a)W\right)\,\in\,\B_0(\LT)~~~~~~~~~~~~\ \ \ \ \text{and} \ \ \ \ ~~~~~~~~~~~~
(\id\otimes\om)\,\left(V\,(a\otimes 1)V^*\right)\,\in\,\B_0(\LT)\,.
\]
Therefore, if $\G$ is regular and $a\in\LL\cap\B_0(\LT)$ then
\begin{equation}\label{1313}
f\star a\,,\ a\star f\,\in\,\CZ\cap\B_0(\LT)
\end{equation}
for all $f\in\LO$.

\par
The following is the main result of the paper.
\begin{theorem}\label{main}
Let $\G$ be a regular locally compact quantum group. If
$$\LL\,\cap\,\BZ\, \neq\, \{0\}\,,$$
then $\G$ is discrete.
\end{theorem}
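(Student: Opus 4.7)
\emph{Proof plan.} My plan is to use \eqref{1313} and the left-invariance of $\fee$ to turn the existence of a single non-zero compact element of $\LL$ into $\sigma$-weak density of $\CZ\cap\BZ$ inside $\LL$; this will force $\LL$ to be purely atomic, and the quantum group structure then identifies $\G$ as discrete.

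The first step is a spectral reduction. Passing from $a$ to $a^*a\in\LL\cap\BZ$ and applying Borel functional calculus inside $\LL$ to a non-zero eigenvalue of this positive compact operator produces a non-zero finite-rank projection $p\in\LL\cap\BZ$. Since $p\LL p\subseteq\B(p\LT)$ is a finite-dimensional $*$-subalgebra, a short argument using semifiniteness and normality of $\fee$ (together with QG-specific features of the left Haar weight) shows that $\fee(p)<\infty$. Left-invariance of $\fee$ then yields the key identity
\[
(\id\otimes\fee)\,\Gamma(p)\,=\,\fee(p)\cdot 1_{\LL}.
\]

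Next, I approximate $\fee$ from below by bounded normal functionals $\om_\alpha=\fee(e_\alpha^{1/2}\,\cdot\,e_\alpha^{1/2})\in\LO^+$, where $e_\alpha\nearrow 1$ is an increasing net in $\mathcal{M}_\fee^+$. For each $\alpha$ the slice $p\star\om_\alpha=(\id\otimes\om_\alpha)\Gamma(p)$ lies in $\CZ\cap\BZ$ by \eqref{1313}; on the other hand, a Fubini computation using left-invariance of $\fee$ shows that $p\star\om_\alpha\to\fee(p)\cdot 1_{\LL}$ in the $\sigma$-weak topology, since for every $\mu\in\LO^+$ one has $\mu(p\star\om_\alpha)=\om_\alpha(\mu\star p)\nearrow\fee(\mu\star p)=\mu(1)\fee(p)$. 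Hence $1_{\LL}$ belongs to the $\sigma$-weak closure of the norm-closed two-sided ideal $\CZ\cap\BZ$. That closure is itself a $\sigma$-weakly closed two-sided ideal of $\LL$ (hence of the form $\LL z$ for some central projection $z$), and containing $1_{\LL}$ forces it to equal $\LL$: the compact operators in $\CZ$ are $\sigma$-weakly dense in $\LL$.

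Finally, the $\sigma$-weak density of $\LL\cap\BZ$ in $\LL$, combined with iterated spectral decomposition of positive compact elements, forces $\LL$ to be purely atomic---an $\ell^\infty$-direct sum of full matrix algebras---and a purely atomic locally compact quantum group is necessarily discrete. I expect the main obstacle to be this last step, upgrading ``$\LL$ is purely atomic as a von Neumann algebra'' to ``$\G$ is a discrete quantum group'', where co-associativity of $\Gamma$, the pentagon relation for $W$, and the regularity hypothesis all enter crucially. The verification that $\fee(p)<\infty$ is a second subtle point, though it becomes routine once QG-specific properties of the left Haar weight (scaling group, modular element) are invoked.
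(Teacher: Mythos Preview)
Your density argument---spreading a single finite-rank projection $p$ out to a scalar multiple of $1$ via $p\star\om_\alpha$ with $\om_\alpha\nearrow\fee$---is correct and rather elegant; it does yield that $\CZ\cap\BZ$ (hence $\LL\cap\BZ$) is $\sigma$-weakly dense in $\LL$, from which one extracts $\LL=\ell^\infty\text{-}\bigoplus_i M_{n_i}$ with all $n_i<\infty$. (Your worry about $\fee(p)<\infty$ is misplaced: since $p$ has finite rank, $p\LL p$ is finite-dimensional, so $p$ is a finite sum of minimal projections of $\LL$; for a minimal projection $q$ semifiniteness gives $0\neq y\leq q$ with $\fee(y)<\infty$, and then $y=qyq\in\C q$ forces $\fee(q)<\infty$. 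No quantum-group-specific input is needed.) This part of the plan is a genuinely different route from the paper, which instead establishes pure atomicity (Lemma~\ref{07}) and finite-dimensionality of the blocks (Lemma~\ref{05}) via faithfulness of the convolution action $x\mapsto x\star\mu$ for $0\leq\mu\in\MG$ (Lemma~\ref{P.faithful}) and the right Haar weight.

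The gap is the last step. Your sentence ``a purely atomic locally compact quantum group is necessarily discrete'' is false as stated: the introduction of the paper records examples of $\G$ with $\LL$ purely atomic which are not discrete. Even granting your stronger conclusion that every block $M_{n_i}$ is finite-dimensional, knowing the von Neumann structure $\LL=\ell^\infty\text{-}\bigoplus_i M_{n_i}$ does not by itself identify $\CZ$, and it is precisely the identification $\CZ=c_0\text{-}\bigoplus_i M_{n_i}$ (so that $\CZ$ is an ideal in $\CZ^{**}$, and Runde's criterion applies) that delivers discreteness. The paper proves this identification in Lemma~\ref{010} by a separate Hahn--Banach argument using the normal/singular decomposition of functionals on $\LL$ together with \eqref{1313}; your plan supplies no substitute, and the phrase ``co-associativity of $\Gamma$, the pentagon relation for $W$, and the regularity hypothesis all enter crucially'' does not point to one. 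So the proposal is incomplete exactly at the point where one must pass from the bare von Neumann structure of $\LL$ to a statement about the $C^*$-algebra $\CZ$.
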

Note that the converse of this theorem is also true; it in fact follows from
the structure theory of discrete quantum groups (cf. \cite{Wor}).

\par
We break down the proof of the theorem into few lemmas that follow.

\begin{lemma}\label{P.faithful}
Let $0\leq\mu\in \MG$ be non-zero.
Then the convolution map $x \mapsto x\star \mu$ is faithful on $\LL^+$.
\end{lemma}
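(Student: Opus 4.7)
The plan is to exploit the left invariance of the Haar weight $\fee$. First I would unpack the paper's (opposite) convention: the map $x\mapsto x\star\mu$ on $\LL$ is the adjoint of $f\mapsto\mu\star f$ on $\LO$, and computing from the defining formula $(\mu\star f)(x)=f\bigl((\mu\otimes\id)\Gamma(x)\bigr)$ identifies it as
\[
x\star\mu \,=\, (\mu\otimes\id)\,\Gamma(x).
\]
With this identification, the left invariance axiom of $\fee$ stated in the excerpt specialises on $\LO^+$ to $\fee(x\star\om)=\om(1)\fee(x)$ for all $x\in\LL^+$ and $\om\in\LO^+$. The essential step is to upgrade this identity from $\om\in\LO^+$ to arbitrary $\mu\in\MG^+$, giving
\[
\fee(x\star\mu) \,=\, \mu(1)\,\fee(x) \qquad (x\in\LL^+,\ \mu\in\MG^+).
\]

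Once this extended invariance is granted, the lemma is immediate. Indeed, suppose $x\in\LL^+$ and $x\star\mu=0$. Then
\[
\mu(1)\,\fee(x) \,=\, \fee(x\star\mu) \,=\, \fee(0) \,=\, 0.
\]
Since $\mu\geq 0$ and $\mu\neq 0$, one has $\mu(1)=\|\mu\|>0$, forcing $\fee(x)=0$. Faithfulness of the Haar weight $\fee$ on $\LL^+$ then yields $x=0$. This is the precise quantum counterpart of the classical argument: for a non-zero positive Borel measure $\mu$ on a locally compact group $G$ and $x\in L^\infty(G)^+$, integration of $x\star\mu$ against the left Haar measure of $G$ gives $\mu(G)\int_G x$, so $x\star\mu=0$ forces $x=0$ a.e.

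The principal obstacle is precisely the invariance extension from $\LO^+$ to $\MG^+$. Functionals in $\MG\setminus\LO$ are not normal on $\LL$, so both the slice $(\mu\otimes\id)\Gamma(x)\in\LL$ and the invariance formula have to be interpreted with some care. In the Kustermans--Vaes framework this extension is standard: one can approximate $\mu$ weak$^*$ by positive normal functionals and combine appropriate continuity of the slice maps with lower semicontinuity of $\fee$, or alternatively lift $\mu$ to a positive functional on the universal $C^*$-algebra $\MU$ and invoke invariance of the extended Haar weight at the universal level. Either way, once the extended formula $\fee(x\star\mu)=\mu(1)\fee(x)$ is in hand, the argument collapses to the one-line calculation above.
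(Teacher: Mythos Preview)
Your overall strategy---apply invariance of a Haar weight to $x\star\mu$ and then use faithfulness---is exactly the one the paper uses. Two remarks, one cosmetic and one substantive.

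First, under the paper's (explicitly ``opposite'') convention the adjoint of $f\mapsto f\star\mu$ is $x\mapsto x\star\mu$, so in fact $x\star\mu=(\id\otimes\mu)\,\Gamma(x)$, not $(\mu\otimes\id)\,\Gamma(x)$. Accordingly the paper works with the \emph{right} Haar weight $\psi$ rather than $\fee$. This is a harmless swap and your argument transposes verbatim.

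Second, and more importantly, the ``principal obstacle'' you flag---extending invariance from $\om\in\LO^+$ to $\mu\in\MG^+$---is not actually dispatched by your sketch. The weak$^*$ approximation route is problematic: for non-compact $\G$ one has $1\notin\CZ$, so $\om_\alpha(1)$ need not converge to $\mu(1)$ along a weak$^*$ approximating net, and it is not clear in which topology $x\star\om_\alpha\to x\star\mu$ so that lower semicontinuity of the weight can be invoked. The paper sidesteps the extension entirely with a short device: pick any nonzero $\om\in\LO^+$, apply the (stated, normal) right invariance of $\psi$ once to the element $x\star\mu$ with the normal functional $\om$, use co-associativity to rewrite
\[
(\id\otimes\om)\,\Gamma(x\star\mu)\,=\,(\id\otimes(\om\star\mu))\,\Gamma(x),
\]
and then apply right invariance a second time with the \emph{normal} functional $\om\star\mu\in\LO^+$ (this is where the fact that $\LO$ is a two-sided ideal in $\MG$ is used). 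One obtains $\psi(x\star\mu)\,\om(1)=\psi(x)\,\om(1)\,\|\mu\|$ directly, with no need to extend the invariance axiom beyond $\LO^+$. That trick is the missing ingredient in your proposal.
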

\begin{proof}
Let $\psi$ be the right Haar weight of $\G$, then we have
\begin{eqnarray*}
\psi (x\star\mu) &=& \psi (x\star \mu)\,\om(1) \,=\,  \psi \big((\id\otimes \om)\,\Gamma (x\star\mu)\big)\\
&=& \psi\big((\id\otimes (\om\star\mu))\, \Gamma(x)\big)
\,=\, \psi(x)\,\om(1)\,\|\mu\|
\end{eqnarray*}
for all $x \in \LL^{+}$ and $\om\in\LO^+$, and since $\psi$ is faithful, the lemma follows.
\end{proof}

\begin{lemma}\label{07}
The von Neumann algebra $\LL$ is purely atomic.
\end{lemma}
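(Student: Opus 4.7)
My plan is to produce a nonzero finite-rank projection in $\LL$ and then show that the collection $\mathcal P$ of all such projections has supremum $1$, via an argument echoing Lemma~\ref{P.faithful} but using $\fee$ in place of $\psi$. Taking $a \geq 0$ nonzero in $\LL \cap \BZ$ (replace $a$ by $a^*a$ if necessary), compactness together with continuous functional calculus in $\LL$ gives a nonzero finite-rank projection $p := \chi_{[\varepsilon,\|a\|]}(a) \in \LL$ for any $\varepsilon \in (0, \|a\|)$. Since $\mathcal P$ is invariant under conjugation by unitaries of $\LL$, the projection $q := \sup\mathcal P$ lies in $Z(\LL)$; the core of the proof is to show $q = 1$.

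Assume for contradiction $e := 1 - q \neq 0$. For every positive $b \in \LL \cap \BZ$, each spectral projection of $b$ above a positive level lies in $\mathcal P$, hence below $q$, so $eb = 0$. Applying (\ref{1313}) one has $b \star \omega = (\id \otimes \omega)\Gamma(b) \in \LL \cap \BZ$ for every $\omega \in \LO$, whence $0 = e\,(b \star \omega) = (\id \otimes \omega)\bigl[(e \otimes 1)\Gamma(b)\bigr]$; since slices in the second factor separate $\LL \vtp \LL$, this yields $(e \otimes 1)\Gamma(b) = 0$. Choose a normal state $\eta$ on $\LL$ with $\eta(e) > 0$ (possible because $e \neq 0$) and slice on the left to obtain $0 = (\eta \otimes \id)\bigl[(e \otimes 1)\Gamma(b)\bigr] = ((\eta \cdot e) \otimes \id)\Gamma(b) = (\eta \cdot e) \star b$, where $\eta \cdot e \in \LO^+$ is the nonzero positive functional $x \mapsto \eta(ex)$ of norm $\eta(e) > 0$. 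Repeating the proof of Lemma~\ref{P.faithful} verbatim but using left invariance of $\fee$ in place of right invariance of $\psi$ shows that $x \mapsto \mu \star x$ is faithful on $\LL^+$ for every nonzero $\mu \in \LO^+$; applied with $\mu = \eta \cdot e$ and $x = b$, this forces $b = 0$, the desired contradiction. Hence $q = 1$.

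Given $\sup \mathcal P = 1$, pure atomicity follows along standard lines: for any nonzero projection $r \in \LL$ there must exist $p \in \mathcal P$ with $rp \neq 0$ (otherwise $r \leq 1 - \sup \mathcal P = 0$), so $rpr$ is a nonzero positive finite-rank element of $r \LL r$; a spectral projection of $rpr$ above a positive level produces a nonzero finite-rank projection $s \leq r$, and as $s \LL s \subset s \B(\LT) s$ is finite-dimensional, $s \LL s$ decomposes as a finite direct sum of matrix algebras and contains a minimal projection, which is also minimal in $\LL$ and sits below $r$. Thus every nonzero projection of $\LL$ dominates a minimal projection, so $\LL$ is purely atomic. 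The principal obstacle I foresee is the step $q = 1$; my proposed resolution uses (\ref{1313}) to recast the compact-annihilation relation $eb = 0$ as the algebraic identity $(e \otimes 1)\Gamma(b) = 0$, and then manufactures the nonzero positive functional $\eta \cdot e \in \LO^+$ in order to apply the left-sided analog of Lemma~\ref{P.faithful} and reach a contradiction.
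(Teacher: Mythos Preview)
Your proof is correct and follows essentially the same route as the paper's: both show that a complementary projection ($e_0$ in the paper, your $e=1-q$) annihilates $\LL\cap\BZ$, use (\ref{1313}) to keep convolutions compact, and then apply the faithfulness statement of Lemma~\ref{P.faithful} to force that projection to vanish. The only differences are cosmetic---you work with finite-rank projections and use centrality of $q$ to make $\eta\cdot e$ positive, while the paper uses minimal projections and a functional $\omega_0$ with $\mathrm{supp}(\omega_0)\le e_0$; note also that your convolution notation is opposite to the paper's, so your $(\eta\cdot e)\star b=((\eta\cdot e)\otimes\id)\Gamma(b)$ is precisely the paper's $b\star(\eta\cdot e)$, and hence Lemma~\ref{P.faithful} already applies verbatim without needing a separate left-handed version via $\varphi$.
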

\begin{proof}
Since $\LL \,\cap\,\B_0(\LT)\,\neq\,\{0\}$, it follows that $\LL$ contains a non-zero minimal projection.
Suppose that $\{e_\alpha\}$ is a maximal
family of mutually orthogonal minimal projections in $\LL$.
Set $e_0 = 1 - \sum_{\alpha} e_\alpha$, and let $a\in\LL \,\cap\,\B_0(\LT)$
be non-zero and positive.
Moreover, suppose that $\om_0\in\LO^+$
is such that supp$(\om_0)\leq e_0$. Then, since $e_0$ does not dominate
any non-zero minimal projection, it follows that $e_0 \left(\LL \,\cap\,\B_0(\LT)\right) e_0 = 0$,
and therefore using (\ref{1313}) we obtain
\[
\langle\,f\,,\,a\star\om_0\,\rangle \,=\,\langle\,\om_0\,,\,f\star a\,\rangle \,=\,
\langle\,\om_0\,,\,e_0(f\star a)e_0\,\rangle \,=\,0
\]
for all $f\in\LO$, and therefore $a \star \om_0 = 0$. So, it follows
from Lemma \ref{P.faithful} that $\om_0 = 0$. Hence, $e_0 = 0$ and $\LL$ is purely atomic.
\end{proof}

So, by the previous lemma, we conclude that $\LL$ is a direct sum of type I factors:
\begin{equation}\label{06}
\LL = l^\infty-\oplus_{i\in \mathcal{I}}\,\B(H_i)\,.
\end{equation}
Then $\LT$, being the (unique) standard Hilbert space of $\LL$,
can be identified with the Hilbert space $l^2-\oplus_{i\in I}\,\mathcal{HS}(H_i)$,
where $\mathcal{HS}(H_i)$ is the Hilbert--Schmidt space over $H_i$.
Moreover, from the uniqueness of the standard representation,
it follows that the representation of every summand $\B(H_i)$
on $\LT$ is equivalent to their representation on
$H_i\otimes H_i$, mapping $a \in \B(H_i)$ to $a\otimes 1 \in \B(H_i\otimes H_i)$.
In particular, if $a \in \B(H_i)$ is compact on $\LT$, then either $a=0$ or $H_i$ is finite dimensional.

We denote by $1_j\in\oplus_{i\in \mathcal{I}} \B(H_i)$ the projection onto $H_j$.
Then $1_j$ is in the center of $\oplus_{i\in \mathcal{I}} \B(H_i)$, and
$x = \sum_i 1_i x$ for all $x\in \oplus_{i\in \mathcal{I}} \B(H_i)$.

If $a\in\LL\cap \B_0(\LT)$, then $1_j a$ is a compact operator on $\LT$.
Thus, if $1_j a \neq 0$, the Hilbert space $H_j$ is finite dimensional.

\begin{lemma}\label{05}
For each $j\in \mathcal{I}$ there exists $a\in\LL\,\cap\, \B_0(\LT)$ such that $1_j a \neq 0$.
In particular, dim$(H_j) < \infty$ for all $j\in\mathcal{I}$.
\end{lemma}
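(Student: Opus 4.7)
Let $K := \LL \cap \B_0(\LT)$, $J := \{i \in \mathcal{I} : 1_i K \neq 0\}$ and $q := \sum_{i \in J} 1_i$, a non-zero central projection in $\LL$. By definition $K \subseteq q\LL$, and any non-zero element of $1_i K$ forces $\dim H_i < \infty$ (as noted after (\ref{06})); therefore finite truncations show $K$ is weak-$*$ dense in $q\LL$. The lemma reduces to showing $q = 1$.

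The first step of my plan is to extract from (\ref{1313}) the inequality $\Gamma(q) \leq q \otimes q$. Given $a \in K$ and $f \in \LO$, by (\ref{1313}) both $f \star a = (f \otimes \iota)\Gamma(a)$ and $a \star f = (\iota \otimes f)\Gamma(a)$ again lie in $K \subseteq q\LL$, so the central projection $1 - q$ annihilates them on the appropriate side. Pulling $1 - q$ through the slice maps via their bimodule property yields
\[
(f \otimes \iota)\bigl((1 \otimes (1-q))\Gamma(a)\bigr) \,=\, 0 \,=\, (\iota \otimes f)\bigl(((1-q) \otimes 1)\Gamma(a)\bigr)
\]
for every $f \in \LO$. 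Since slice maps by $\LO$ separate points of $\LL \vtp \LL$, this forces $(1 \otimes (1-q))\Gamma(a) = 0 = ((1-q) \otimes 1)\Gamma(a)$ for all $a \in K$; normality of $\Gamma$ and the weak-$*$ density of $K$ in $q\LL$ propagate these identities to every $a \in q\LL$, and specialising to $a = q$ gives $\Gamma(q) \leq q \otimes q$.

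The second step is a short contradiction. If $q \neq 1$, then $1 - q \neq 0$, so there is $\om \in \LO^+$ with $\om(q) = 0$ but $\om(1) > 0$ (take any non-zero normal state on the summand $(1-q)\LL$ and extend by zero). Applying $\iota \otimes \om$ to $\Gamma(q) \leq q \otimes q$ and using positivity of $\om$ gives
\[
0 \,\leq\, q \star \om \,=\, (\iota \otimes \om)\Gamma(q) \,\leq\, \om(q)\,q \,=\, 0,
\]
so $q \star \om = 0$; Lemma \ref{P.faithful} then forces $q = 0$, contradicting $K \neq 0$. Thus $q = 1$, and the final assertion $\dim(H_j) < \infty$ follows from the description of $\LL \cap \B_0(\LT)$ after (\ref{06}).

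I expect the main obstacle to be the first step: translating the bare convolution invariance of $K$ furnished by (\ref{1313}) into the clean co-multiplicative inequality $\Gamma(q) \leq q \otimes q$ for its central support. This requires identifying the correct central projection $q$, invoking slice-map separation in $\LL \vtp \LL$, and using normality of $\Gamma$ together with the weak-$*$ density of $K$ in $q\LL$. Once the inequality is in place, Lemma \ref{P.faithful} essentially handles the rest by itself.
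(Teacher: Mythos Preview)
Your proof is correct and takes a genuinely different route from the paper's. The paper argues index by index: fixing $j$ and a single nonzero positive $a\in K$, it assumes $1_j(a\star f)=0$ for all $f\in\LO$, pairs against some $\om\in\LO^+$ with $\om(1_j)\neq 0$ to obtain $(1_j\om)\star a=0$, and then invokes faithfulness to reach a contradiction. You instead bundle all indices into the central support $q=\sum_{i\in J}1_i$, use (\ref{1313}) together with slice-map separation and normality of $\Gamma$ to derive the structural inequality $\Gamma(q)\le q\otimes q$, and finish with a single application of Lemma~\ref{P.faithful}. The paper's argument is shorter and needs no weak-$*$ density step; yours is more conceptual, exhibiting $q$ as a group-like projection, and has the incidental virtue of landing exactly on the side of the convolution ($q\star\om$) for which Lemma~\ref{P.faithful} is stated---whereas the paper's conclusion $(1_j\om)\star a=0$ tacitly requires the symmetric version proved with the left Haar weight.
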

\begin{proof}
Let $0\leq a\in\LL\,\cap\, \B_0(\LT)$ be non-zero. We show that there exists $f\in\LO$
such that $1_j (a\star f)\neq 0$, and this yields the lemma by (\ref{1313}).
So, suppose in contrary that $1_j (a\star f) = 0$ for all $f\in \LO$, and choose $0\leq \om\in\LO$
such that $\om(1_j)\neq 0$. Then we get
\[
\langle\, f\,,\,(1_j\om)\star a  \,\rangle\,=\,\langle\, \om\,,\,1_j (a\star f)  \,\rangle\,=\,0
\]
for all $f\in\LO$, which implies that $(1_j\om)\star a = 0$. But,
by Lemma \ref{P.faithful} this contradicts our assumptions.
\end{proof}

The proof of the following lemma is standard.
\begin{lemma}\label{big-small}
If $H$ and $K$ are Hilbert spaces, and $\Phi: \B(H) \rightarrow \B(K)$ is an injective
$*$-homomorphism, then $\Phi(x) \in \B_0(K)$ implies $x\in\B_0(H)$.
\end{lemma}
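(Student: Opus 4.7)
The plan is to exploit the fact that $\B(H)$ admits only three norm-closed two-sided $*$-ideals, namely $\{0\}$, $\B_0(H)$, and $\B(H)$ itself, and show that the preimage $I := \Phi^{-1}(\B_0(K))$ is one of them, with the three possibilities each yielding the conclusion.

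First I would check that $I$ is indeed a norm-closed two-sided $*$-ideal of $\B(H)$. It is $*$-closed because $\Phi$ is a $*$-homomorphism and $\B_0(K)$ is $*$-closed, it is norm-closed because $\Phi$ is contractive (being a $*$-homomorphism of $C^*$-algebras) and $\B_0(K)$ is norm-closed, and it is a two-sided ideal because for any $x\in I$ and $y\in\B(H)$ we have $\Phi(xy)=\Phi(x)\Phi(y)\in\B_0(K)$ and similarly on the other side, using that $\B_0(K)$ is an ideal in $\B(K)$.

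Next I would handle the three cases. If $I=\{0\}$, then $\Phi(x)\in\B_0(K)$ forces $x=0\in\B_0(H)$. If $I=\B_0(H)$, then $\Phi(x)\in\B_0(K)$ is by definition equivalent to $x\in\B_0(H)$, which is exactly the conclusion. The only remaining case is $I=\B(H)$, in which case $\Phi(1)$ is a compact projection on $K$ and therefore has finite-dimensional range. Then $\Phi(\B(H))\subseteq \Phi(1)\,\B(K)\,\Phi(1)=\B(\Phi(1)K)$, which is finite-dimensional; injectivity of $\Phi$ forces $\B(H)$ itself to be finite-dimensional, so $H$ is finite-dimensional, and consequently $\B(H)=\B_0(H)$, so again $x\in\B_0(H)$.

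The main (and essentially only) non-trivial ingredient is the classification of norm-closed two-sided ideals of $\B(H)$; everything else is bookkeeping. Since the lemma is flagged as standard in the paper, I would likely just cite this classification rather than reproving it, and present the case analysis above.
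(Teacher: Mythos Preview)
The paper offers no proof for this lemma beyond the remark that it is standard, so there is no argument of the paper's to compare against. Your approach via the closed two-sided ideal $I=\Phi^{-1}(\B_0(K))$ is natural and correct when $H$ is separable, but the claim that $\B(H)$ has exactly the three norm-closed two-sided ideals $\{0\}$, $\B_0(H)$, $\B(H)$ fails for non-separable $H$: by the Gramsch--Luft classification there is then a proper chain of closed ideals strictly between $\B_0(H)$ and $\B(H)$. As written, your trichotomy is therefore incomplete in the generality in which the lemma is stated.

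The repair is painless and recycles your Case~3 idea. That argument in fact shows: whenever $p\in\B(H)$ is a projection with $\Phi(p)\in\B_0(K)$, the image $\Phi(p)$ is a compact projection, hence of finite rank, so $\Phi\bigl(p\,\B(H)\,p\bigr)\subseteq\Phi(p)\,\B(K)\,\Phi(p)$ is finite dimensional, and injectivity of $\Phi$ forces $p$ itself to have finite rank. Now if some positive $x\in I$ were non-compact, choose $\varepsilon>0$ so that the spectral projection $p_\varepsilon=E_{[\varepsilon,\infty)}(x)$ has infinite rank; with $g(t)=\min(t/\varepsilon,1)$ one has $g(x)\in I$ (since $g(0)=0$) and hence $p_\varepsilon=p_\varepsilon\,g(x)\in I$, contradicting the previous observation. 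Thus $I\subseteq\B_0(H)$ in every case, and the lemma follows without invoking the full ideal classification.
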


\begin{lemma}\label{010}
We have
\[
c_0-\oplus_{i\in I}\,\B_0(H_i)\,=\,\LL\,\cap\, \B_0(\LT)\,=\,\CZ \,.
\]
\end{lemma}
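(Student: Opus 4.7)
I address the two equalities separately.

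\textbf{First equality}, $c_0\text{-}\bigoplus_i \B_0(H_i) = \LL \cap \B_0(\LT)$. By Lemma~\ref{05}, each $H_i$ is finite-dimensional, so $\B_0(H_i) = \B(H_i)$. For the inclusion $\subseteq$, each $b \in \B(H_j) = 1_j\LL 1_j$ is represented on $\LT \cong \bigoplus_i \mathcal{HS}(H_i)$ as a finite-rank operator supported on the finite-dimensional summand $\mathcal{HS}(H_j)$, so finite sums remain finite-rank and the $c_0$-sum, being a norm limit of these, lies in $\B_0(\LT)$. For the reverse inclusion, take $a \in \LL \cap \B_0(\LT)$ and set $P_F := \sum_{i \in F} 1_i$ for finite $F \subset \mathcal{I}$. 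Then $P_F \to 1$ strongly, and compactness of $a$ gives $\|P_F a - a\| \to 0$; mutual orthogonality of the projections $1_i$ yields $\|(1 - P_F)a\| = \sup_{i \notin F} \|1_i a\|$, delivering the required $c_0$ decay.

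\textbf{Second equality}, $\A := \LL \cap \B_0(\LT) = \CZ$. The main tool is (\ref{1313}), which gives $f \star a,\, a \star f \in \CZ \cap \A$ for every $a \in \A$ and $f \in \LO$. I split into two inclusions. For $\A \subseteq \CZ$, I reduce to showing $1_i \in \CZ$ for each $i \in \mathcal{I}$: weak-$\ast$ density of $\CZ$ in $\LL$ implies that the image of $\CZ$ under $x \mapsto 1_i x$ is a weak-$\ast$ dense $\ast$-subalgebra of the finite-dimensional simple factor $\B(H_i)$, hence equal to $\B(H_i)$, so for any $b \in \B(H_i)$ some $c \in \CZ$ has $1_i c = b$, and once $1_i \in \CZ$ we get $b = 1_i c \in \CZ$; running over all $b$ and $i$ and norm-closing finite sums yields $\A \subseteq \CZ$. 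To produce $1_i \in \CZ$, the plan is to assemble normal functionals $f_\alpha \in \LO$ with $f_\alpha \star 1_i \to 1_i$ in norm, built from restrictions of the left Haar weight $\fee$ to finite blocks (noting $\fee(1_i) < \infty$ by atomicity and finite-dimensionality of $H_i$) and controlled via the left-invariance identity $\fee((\om \otimes \id)\Gamma(x)) = \fee(x)\om(1)$.

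For $\CZ \subseteq \A$, once $\A \subseteq \CZ$ is in hand, $\A$ is a norm-closed two-sided $\ast$-ideal of $\CZ$. I would argue the quotient $\CZ/\A$ is zero by combining the weak-$\ast$ density of $\CZ$ in $\LL$, the faithfulness from Lemma~\ref{P.faithful}, and (\ref{1313}), to rule out any non-compact element of $\CZ$: a hypothetical element of $\CZ \setminus \A$ has components of persistent norm along infinitely many summands, and these must be incompatible with the convolution structure forcing $f \star c \in \A$.

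\textbf{Main obstacle.} The first equality is standard operator theory (strong convergence times compact is norm convergence, together with orthogonality). The second hinges on the $\A \subseteq \CZ$ direction, specifically realising each $1_i$ as a norm-limit of $\LO$-convolutions. In the absence of a bounded approximate identity on $\LO$ (which would correspond to coamenability, not assumed here), this has to be extracted from the atomic structure of $\LL$, the finiteness of $\fee$ on each minimal central projection, and the regularity of $\G$; finessing the interplay of these three ingredients is the heart of the argument.
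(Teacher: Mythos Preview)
Your treatment of the first equality is fine and matches the paper's, which simply cites Lemmas~\ref{05} and~\ref{big-small}.

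For the second equality, however, there is a genuine gap. Your plan for $\A\subseteq\CZ$ hinges on exhibiting each central projection $1_i$ as a norm limit of convolutions $f_\alpha\star 1_i$ with $f_\alpha\in\LO$; you correctly flag this as the main obstacle and do not carry it out. Without coamenability there is no bounded approximate identity in $\LO$, and the ingredients you list (atomicity, $\fee(1_i)<\infty$, regularity) do not by themselves produce such approximants. Your argument for $\CZ\subseteq\A$ is even less concrete: the assertion that a non-compact element of $\CZ$ would be ``incompatible'' with $f\star c\in\A$ is not justified, since (\ref{1313}) only applies when the input is already in $\A$.

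The paper bypasses both difficulties with a short duality argument. For $\A\subseteq\CZ$: take $\phi\in\LL^*$ vanishing on $\CZ$ and split $\phi=\phi_n+\phi_s$ into normal and singular parts. By (\ref{1313}), $\phi(a\star f)=0$ for $a\in\A$, $f\in\LO$; but $\phi_s$ kills compacts, so $\langle\phi_n,a\star f\rangle=0$, i.e.\ $\phi_n\star a=0$ for all $a$ in the weak$^*$-dense set $\A=c_0\text{-}\oplus_i\B(H_i)$, forcing $\phi_n=0$ by normality of convolution. Thus $\phi=\phi_s$ vanishes on $\A$, and Hahn--Banach gives the inclusion. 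For $\CZ\subseteq\A$: if $\mu\in\MG$ vanishes on $\A$, the same reasoning gives $\mu\star a=0$ for all $a\in\A$, hence $\mu=0$ by weak$^*$ density and normality. The key idea you are missing is the normal/singular decomposition, which lets one separate the compact-operator behaviour from the $\CZ$-behaviour without ever constructing approximate units.
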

\begin{proof}
First equality follows from Lemmas \ref{05} and \ref{big-small}.
We first show that the first two spaces are included in $\CZ$.
Suppose that $\phi\in\LL^*$ is zero on
$\CZ$. Denote by $\phi_n$ and $\phi_s$
the normal and singular parts of $\phi$, respectively.
Then, by (\ref{1313}) we have $\phi(a\star f)=0$ for all $a\in \LL\,\cap\, \B_0(\LT)$ and $f\in\LO$,
and since $\phi_s$ is zero on any compact operator, it follows that $\langle\,\phi_n\,,\,a\star f\,\rangle =0$.
This implies that $\phi_n\star a = 0$
for all $a\in \LL\,\cap\, \B_0(\LT) = c_0-\oplus_{i\in I}\,\B(H_i)$.
Since the latter is weak* dense in $\LL$ and convolution map $x\mapsto\phi_n\star x$
is normal on $\LL$, it follows that $\phi_n = 0$. Hence, $\phi$ vanishes on
$\LL\,\cap\, \B_0(\LT)$, and therefore the inclusion follows.

For the reverse inclusion,
suppose that $\mu\in\MG$ is zero on $c_0-\oplus_{i\in I}\,\B(H_i)\,=\,\LL\,\cap\, \B_0(\LT)\,\subseteq\,\CZ$,
then similar to the above we get $\mu\star a = 0$ for all $a\in c_0-\oplus_{i\in I}\,\B(H_i)$,
and since convolution action by $\mu$ is normal on $\LL$, we get $\mu=0$.
This completes the proof.
\end{proof}

\noindent
{\bf Proof of Theorem \ref{main}.}\
From Lemma \ref{010} we have $\CZ = {c_0-\oplus_i \B(H_i)}$ is an ideal in $\CZ^{**} = {l^\infty-\oplus_i \B(H_i)}$.
Hence $\G$ is discrete by \cite[Theorem 4.4]{Run-JOT}.\qed

\begin{corollary}\label{08}
Let $\G$ be a regular locally compact quantum group such that
\[
\LL\,=\, l^\infty-\oplus_{i\in \mathcal{I}}\,\B(H_i)\,.
\]
Then the following are equivalent:
\begin{itemize}
\item[(i)]
dim$(H_i) < \infty$ for all ${i\in \mathcal{I}}$;
\item[(ii)]
dim$(H_i) < \infty$ for some ${i\in \mathcal{I}}$;
\item[(iii)]
dim$(H_i) = 1$ for some ${i\in \mathcal{I}}$;
\item[(iv)]
$\G$ is discrete.
\end{itemize}
\end{corollary}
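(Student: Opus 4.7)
The plan is to establish the chain (i) $\Rightarrow$ (ii) $\Rightarrow$ (iv) $\Rightarrow$ (i) together with (iv) $\Rightarrow$ (iii) $\Rightarrow$ (ii); the implications (i) $\Rightarrow$ (ii) and (iii) $\Rightarrow$ (ii) are trivial (noting that $\mathcal{I}\neq\emptyset$ since $\LL\neq\{0\}$).

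The main step is (ii) $\Rightarrow$ (iv), for which I would apply Theorem \ref{main}. Recall from the discussion preceding Lemma \ref{05} that $\LT$ can be identified with $l^2-\oplus_{i\in\mathcal{I}}\mathcal{HS}(H_i)$, and the representation of each summand $\B(H_i)$ on $\LT$ is equivalent to the action $a\mapsto a\otimes 1$ on $H_i\otimes H_i$ (extended by zero on the other summands). Consequently, if some $H_{i_0}$ is finite-dimensional, then any nonzero $a\in \B(H_{i_0})\subseteq \LL$ acts as a finite-rank, hence compact, operator on $\LT$. Thus $\LL\cap\BZ\neq\{0\}$, and Theorem \ref{main} gives that $\G$ is discrete.

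For the remaining implications (iv) $\Rightarrow$ (i) and (iv) $\Rightarrow$ (iii), I would invoke the structure theory of discrete quantum groups \cite{Wor}: when $\G$ is discrete, $\LL$ decomposes (up to isomorphism) as $l^\infty-\oplus_\pi \B(H_\pi)$ indexed by irreducible corepresentations $\pi$ of the compact dual $\hat\G$, each $H_\pi$ is finite-dimensional, and the trivial corepresentation contributes a one-dimensional summand. Since the type I decomposition is unique, this forces (i) and (iii) to hold in the given presentation of $\LL$.

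I do not anticipate a serious obstacle here, since the hard analytic content is already packaged into Theorem \ref{main}; the remaining arguments boil down to the elementary observation that finite-dimensional matrix summands act on $\LT$ by finite-rank operators, together with well-known structural facts about discrete quantum groups.
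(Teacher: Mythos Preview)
Your proposal is correct and follows essentially the same approach as the paper: the key step (ii)$\Rightarrow$(iv) is obtained from Theorem~\ref{main} after observing that a finite-dimensional summand contributes nonzero compact operators to $\LL$, and (iv)$\Rightarrow$(i),(iii) comes from the structure theory of discrete quantum groups. The only minor difference is that the paper handles (iii)$\Rightarrow$(iv) by directly citing \cite[Proposition~4.1]{Run-JOT} (which does not even require regularity), whereas you route it through the trivial (iii)$\Rightarrow$(ii) and then Theorem~\ref{main}.
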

\begin{proof}
(i)$\Rightarrow$(ii) is trivial. From the structure theory of discrete quantum groups \cite{Wor},
(iv) implies other statements.
The implication (iii)$\Rightarrow$(iv) was proved in \cite[Proposition 4.1]{Run-JOT}
(without regularity condition).
(ii) implies that $\LL\cap\BZ \neq \{0\}$, and hence by Theorem \ref{main}
yields (iv).
\end{proof}

\noindent
%\begin{remark}
{\bf Remark.}\ \
By \cite[Theorem 3.5]{Taylor}, $\LO$ has the Radon--Nikodym property (RNP) if and only if
the von Neumann algebra $\LL$ is purely atomic, i.e. $l^\infty$-direct sum
of type I factors. So, under regularity condition, Corollary \ref{08} gives a distinction between
discreteness of $\G$ and the RNP of $\LO$, based on the dimension of direct summands of $\LL$.
%\end{\remark}

\bibliographystyle{plain}

\end{document}